\DeclareMathOperator\arctanh{arctanh}
\newtheorem{theorem}{Theorem}
\newtheorem{remark}[theorem]{Remark}
\newtheorem{lemma}[theorem]{Lemma}
\newtheorem{proposition}[theorem]{Proposition}
\newcommand{\sh}{\sinh}
\title[Explicit harmonic and wave maps into variable-curvature surfaces]%
      {Explicit harmonic and wave maps into variable-curvature surfaces}
\author{A. Fotiadis, G. Polychrou}
\begin{document}

\subjclass[2020]{58E20,53C43,35L70,53C50}
\keywords{harmonic maps, wave maps, pseudo-Riemannian surfaces,
variable curvature, geometric PDE, Schwarzschild geometry.}

\begin{abstract}
Explicit harmonic and wave maps are typically available only in
highly symmetric or constant-curvature settings, where additional
symmetry or integrability structures are present. We develop a
reduction framework for pseudo-Riemannian surfaces that extends
explicit constructions to a geometrically significant class of
variable-curvature targets. For target metrics of the form $A(R)\,dR^2 - \delta^2 B(R)\,dS^2$, a
geometrically adapted travelling-wave ansatz reduces the
Euler--Lagrange system to a solvable system of first-order ODEs. The method applies simultaneously to harmonic and wave maps, treating the elliptic and hyperbolic regimes uniformly within a single framework. As concrete applications, we construct explicit harmonic maps into ellipsoids, Lorentzian wave maps into hyperboloids and the Schwarzschild exterior, and a mixed-signature example, all in genuinely variable-curvature geometries where explicit constructions are substantially less accessible.
\end{abstract}

\maketitle

\section{Introduction}

Although the qualitative theory of harmonic and wave maps is highly
developed, the construction of \emph{explicit} solutions remains a
difficult and largely open problem, except in very symmetric settings.
Explicit formulas are valuable because they provide concrete test
cases for analytic theory, illuminate underlying geometric mechanisms
and often reveal hidden structures that are otherwise inaccessible.
Outside a small number of highly structured situations, however, the
harmonic map equations quickly become too nonlinear to solve in closed
form.

Harmonic maps have played a central role in geometric analysis since
the work of Eells and Sampson \cite{EellsSampson,EellsLemaire}; in two dimensions,
their conformal invariance links them to complex analysis,
Teichm\"{u}ller theory and geometric structures on surfaces; see,
for example, \cite{Helein, M1, M2, Minsky, S-Y, Wolf1, Wolf2, Wolf3}
and the references therein.

Most known explicit constructions in two dimensions rely on strong
symmetry assumptions on the target, most notably constant curvature
or symmetric-space structures. In these cases, the harmonic map
equations admit additional integrability properties, such as
reductions to classical integrable systems. (\cite{FordyWood} is a reference for the link between harmonic maps and integrable systems) When the curvature of the
target varies, these structures typically break down, and the methods
developed for constant-curvature targets no longer apply.

The primary geometric motivation for exploring the present setup lies
in the interplay between variable curvature and the breakdown of
classical integrability. While constant-curvature targets often admit
additional algebraic or symmetry structures that facilitate explicit
integration, these mechanisms typically disappear in variable-curvature
geometries. Our strategy is therefore to identify a geometrically
natural class of target metrics for which the harmonic/wave map
equations still admit a meaningful symmetry reduction.

The travelling-wave ansatz used below should not be viewed merely as an
ad hoc computational device. Rather, it corresponds to a one-parameter
reduction in which the map profile evolves along the characteristic
combination $t=ay-bx$, while the second target component retains an
affine dependence on the original coordinates. This structure preserves
nontrivial coupling between the target variables while reducing the
Euler--Lagrange system to a tractable first-order problem.

Specifically, consider domain metrics of the form
\[
g = e^{f(x,y)} \bigl( dx^{2} - \epsilon^{2} dy^{2} \bigr),
\qquad \epsilon \in \{1,i\},
\]
on a surface $M$, while on the target surface $N$, we identify a geometrically significant class of
variable-curvature metrics for which the harmonic and wave map
equations admit a systematic reduction to ordinary differential
equations. For target metrics of the form
\[
h = A(R)\,dR^{2} - \delta^{2} B(R)\,dS^{2},
\qquad \delta \in \{1,i\},
\]
where $A$ and $B$ are smooth positive functions, the harmonic map
equations admit a partial separation of variables under a
travelling-wave ansatz, leading to a reduction to integrable
first-order equations.

We emphasise that the present work is mathematically distinct from our
previous contributions
\cite{FotDask1,FotDask2,PPFD}. Those works focused primarily on
harmonic \emph{diffeomorphisms} between surfaces of constant curvature,
with the Beltrami equation serving as the principal analytical tool.
The present paper differs in several essential respects.

First, we consider harmonic and wave maps that are not required to be
diffeomorphisms. Second, the target geometries considered here have
genuinely variable curvature, including ellipsoidal, hyperboloidal and
Schwarzschild-type geometries, rather than constant-curvature model
spaces. Third, the analytical mechanism is fundamentally different:
instead of a Beltrami-equation approach, we exploit first integrals of
the Euler--Lagrange system combined with a symmetry reduction that
applies uniformly in both elliptic and hyperbolic signature settings.

To the best of our knowledge, explicit harmonic and wave maps into
variable-curvature pseudo-Riemannian surfaces of this type remain
comparatively unexplored.

The main result (Theorem~\ref{Theorem1}) provides a reduction
principle for harmonic and wave maps into pseudo-Riemannian surfaces
with metrics of the above form: under the symmetry ansatz introduced
here, the coupled Euler--Lagrange system reduces to a solvable
first-order equation for the one component, while the remaining
component is recovered by quadrature. The subsequent examples
demonstrate how this mechanism produces explicit maps in several
geometrically distinct variable-curvature settings.

A particularly noteworthy application is the construction of harmonic
maps into an \emph{ellipsoid}. Ellipsoids provide canonical
variable-curvature deformations of the round sphere, yet explicit
harmonic maps into them are extremely scarce because the integrability
mechanisms available in symmetric spaces are typically absent.
Recently, \cite{BrandingSiffert} studied harmonic \emph{self}-maps of
ellipsoids in arbitrary dimensions; by contrast, we construct explicit
harmonic maps from a Riemannian surface \emph{into} a non-round
ellipsoidal target in $\mathbb{R}^{3}$.

Equally significant is the Lorentzian setting. When both domain and
target carry indefinite metrics, our framework yields explicit
wave maps into hyperboloidal geometries and into the Schwarzschild
exterior. These examples illustrate that the reduction method applies
not only to purely geometric model spaces but also to pseudo-Riemannian
backgrounds of direct relevance in mathematical physics, \cite{Oneil}.

The paper is organised as follows. Section~2 establishes notation
and derives the harmonic map equations. Section~3 contains the proof
of Theorem~\ref{Theorem1}. Section~\ref{sec:examples} presents
illustrative examples. Section~5 gives concluding remarks.

\section{Preliminaries}

In this section we establish the notation and geometric setup used
throughout the paper. Let $u : (M,g) \longrightarrow (N,h)$ be a
map between pseudo-Riemannian surfaces, written $u = (R,S)$ in local
coordinates.

\subsection{Isothermal coordinates and metric conventions}

On the domain surface $M$ we choose isothermal coordinates
\[
v = x + \epsilon y, \qquad w = x - \epsilon y,
\]
where $\epsilon \in \{1,i\}$: the choice $\epsilon = 1$ corresponds
to a Lorentzian surface and $\epsilon = i$ to a Riemannian one. In
these coordinates the metric takes the conformal form
\[
g = e^{f(x,y)} \left( dx^{2} - \epsilon^{2} dy^{2} \right)
= e^{f(v,w)}\, dv\,dw, \qquad \epsilon \in \{1,i\}.
\]

On the target surface $N$ we consider metrics of the form
\[
h = A(R)\, dR^{2} - \delta^{2} B(R)\, dS^{2}, \qquad \delta \in \{1,i\},
\]
where $A(R)$ and $B(R)$ are smooth, positive functions. This class
includes a wide range of classical variable-curvature geometries,
such as surfaces of revolution (catenoids, paraboloids, tori), as
well as Lorentzian models arising in general relativity and cosmology.
Tables~\ref{tab:2d-metrics-R} and~\ref{tab:2d-metrics-R-nonconst}
list representative examples; they are intended as a
non-exhaustive illustration of the breadth of the framework.

\begin{table}[h]
  \centering
  \renewcommand{\arraystretch}{1.25}
  \begin{tabular}{ll}
    \hline
    \textbf{Model / Example} & \textbf{Metric Form} \\
    \hline
    Flat plane (Cartesian)   & $ds^2 = dR^2 + dS^2$ \\
    Flat plane (Polar)       & $ds^2 = dR^2 + R^2\,dS^2$ \\
    Sphere of radius $a$     & $ds^2 = dR^2 + \cos^2(R/a)\,dS^2$ \\
    Hyperbolic plane         & $ds^2 = dR^2 + \sinh^2(R)\,dS^2$ \\
    \hline
    Minkowski (flat)         & $ds^2 = dR^2 - dS^2$ \\
    Rindler wedge            & $ds^2 = dR^2 - R^2\,dS^2$ \\
    2D de Sitter             & $ds^2 = dR^2 - \cosh^2(R)\,dS^2$ \\
    2D anti-de Sitter        & $ds^2 = dR^2 - \sinh^2(R)\,dS^2$ \\
    Twisted case             & $ds^2 = e^{2R}(dR^2 - dS^2)$ \\
    \hline
  \end{tabular}
  \caption{Examples of pseudo-Riemannian surfaces of constant curvature.}
  \label{tab:2d-metrics-R}
\end{table}

\begin{table}[h]
  \centering
  \renewcommand{\arraystretch}{1.25}
  \begin{tabular}{ll}
    \hline
    \textbf{Model / Example} & \textbf{Metric Form} \\
    \hline
    Unit-neck catenoid    & $ds^2 = \cosh^{2}(R)\,(dR^2 + dS^2)$ \\
    Warped product        & $ds^2 = dR^2 + f(R)^{2}\,dS^2$ \\
    Power-law warp ($p\neq 0,1$) & $ds^2 = dR^2 + R^{2p}\,dS^2$ \\
    Paraboloid ($z=R^2$)  & $ds^2 = (1+R^{2})\,dR^2 + R^2\,dS^2$ \\
    Torus of revolution   &
      $ds^2 = r^2\,dR^2 + (R_0 + r\cos R)^2\,dS^2$ \\
    Ellipsoid             &
      $ds^2 = (c^2\sin^2 R + \cos^2 R)\,dR^2 + \sin^2 R\,dS^2$ \\
    \hline
    2D Schwarzschild reduction &
      $ds^2 = \left(1-\tfrac{2M}{R}\right)^{-1} dR^2
            - \left(1-\tfrac{2M}{R}\right) dS^2$ \\
    2D FRW cosmology      & $ds^2 = dR^2 - f(R)^2\,dS^2$ \\
    Witten's 2D semi-infinite cigar &
      $ds^2 = \tfrac{1}{2}(dR^2 - \tanh^{2}R\,dS^2)$ \\
    Elliptic hyperboloid  &
      $ds^2 = (c^2\sin^2 R+\cos^2 R)\,dR^2 - \sin^2 R\,dS^2$ \\
    \hline
  \end{tabular}
  \caption{Examples of pseudo-Riemannian surfaces with non-constant curvature.}
  \label{tab:2d-metrics-R-nonconst}
\end{table}

\subsection{Energy density and Euler--Lagrange equations}

The energy density of the map $u$ is
\[
e(u) = \tfrac{1}{2} g^{ij} h_{\alpha\beta}
\partial_i u^{\alpha} \partial_j u^{\beta}
= e^{-f(v,w)}
\Bigl( A(R)\, R_v R_w - \delta^{2} B(R)\, S_v S_w \Bigr),
\]
where
\[
R_v = \tfrac{1}{2}\!\left(R_x + \tfrac{1}{\epsilon} R_y\right),
\quad
R_w = \tfrac{1}{2}\!\left(R_x - \tfrac{1}{\epsilon} R_y\right),
\]
\[
S_v = \tfrac{1}{2}\!\left(S_x + \tfrac{1}{\epsilon} S_y\right),
\quad
S_w = \tfrac{1}{2}\!\left(S_x - \tfrac{1}{\epsilon} S_y\right).
\]
The total energy is
\[
E(u) = \int_{M} e(u)\, dM
= \int_{M}
\Bigl(A(R)R_v R_w - \delta^2 B(R)S_v S_w\Bigr)\, dv\,dw.
\]
We call $u$ \textit{harmonic} if it is a critical point of $E(u)$.
Because the energy is conformally invariant in dimension two, the
Euler--Lagrange equations depend only on the conformal class of $g$.

We now compute the Euler--Lagrange equations for the Lagrangian
\[
\mathcal{L} = A(R)\, R_v R_w - \delta^{2} B(R)\, S_v S_w.
\]

\begin{lemma}\label{Euler-Lagrange}
A map $u=(R,S):(M,g)\to(N,h)$ is harmonic if and only if
\begin{align*}
2A(R) R_{vw} + A'(R) R_v R_w + \delta^{2} B'(R) S_v S_w &= 0, \\
2B(R) S_{vw} + B'(R) \left(R_v S_w + R_w S_v\right) &= 0.
\end{align*}
\end{lemma}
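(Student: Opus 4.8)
The plan is to derive the two Euler--Lagrange equations of Lemma~\ref{Euler-Lagrange} directly from the first-variation formula for the energy functional, treating $R$ and $S$ as independent field variables. Since the energy is written in the conformal coordinates $v,w$ as
\[
E(u)=\int_{M}\mathcal{L}\,dv\,dw,\qquad
\mathcal{L}=A(R)\,R_vR_w-\delta^2B(R)\,S_vS_w,
\]
a critical point satisfies the standard two-variable Euler--Lagrange system
\[
\frac{\partial\mathcal{L}}{\partial R}
-\partial_v\frac{\partial\mathcal{L}}{\partial R_v}
-\partial_w\frac{\partial\mathcal{L}}{\partial R_w}=0,
\qquad
\frac{\partial\mathcal{L}}{\partial S}
-\partial_v\frac{\partial\mathcal{L}}{\partial S_v}
-\partial_w\frac{\partial\mathcal{L}}{\partial S_w}=0.
\]
I would verify these two equations separately. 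The crucial simplification is that the conformal factor $e^{f(v,w)}$ cancels out of the Euler--Lagrange equations: it drops from the energy integrand entirely in the $dv\,dw$ formulation, which is exactly the two-dimensional conformal invariance of the Dirichlet energy. This is why the equations depend only on $A$, $B$, and their $R$-derivatives.

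For the $S$-equation I would compute, since $\mathcal{L}$ does not depend on $S$ itself, that $\partial\mathcal{L}/\partial S=0$, while
\[
\frac{\partial\mathcal{L}}{\partial S_v}=-\delta^2B(R)\,S_w,
\qquad
\frac{\partial\mathcal{L}}{\partial S_w}=-\delta^2B(R)\,S_v.
\]
Applying $\partial_v$ and $\partial_w$ respectively and using $B(R)_v=B'(R)R_v$, $B(R)_w=B'(R)R_w$, the mixed derivatives $S_{vw}$ and $S_{wv}$ coincide, and the factor $-\delta^2$ cancels uniformly, yielding
\[
2B(R)S_{vw}+B'(R)\bigl(R_vS_w+R_wS_v\bigr)=0,
\]
which is the second stated equation.

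For the $R$-equation I would record
\[
\frac{\partial\mathcal{L}}{\partial R}=A'(R)R_vR_w-\delta^2B'(R)S_vS_w,
\qquad
\frac{\partial\mathcal{L}}{\partial R_v}=A(R)R_w,
\qquad
\frac{\partial\mathcal{L}}{\partial R_w}=A(R)R_v,
\]
and then compute $\partial_v\bigl(A(R)R_w\bigr)+\partial_w\bigl(A(R)R_v\bigr)=2A(R)R_{vw}+2A'(R)R_vR_w$. Substituting into the Euler--Lagrange expression gives $A'(R)R_vR_w-\delta^2B'(R)S_vS_w-2A(R)R_{vw}-2A'(R)R_vR_w=0$, i.e.
\[
2A(R)R_{vw}+A'(R)R_vR_w+\delta^2B'(R)S_vS_w=0,
\]
matching the first stated equation. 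The computation is entirely routine; the only point demanding care is the bookkeeping of the mixed second derivatives and the signs coming from the $-\delta^2$ factor, so the main (mild) obstacle is simply making sure every $A'(R)R_vR_w$ term is accounted for correctly and that $R_{vw}=R_{wv}$, $S_{vw}=S_{wv}$ are used consistently. An equivalent and perhaps cleaner route, which I would mention as an alternative, is to invoke the intrinsic harmonic map equation $\tau(u)^\gamma=g^{ij}\bigl(\partial_i\partial_j u^\gamma-\Gamma^M\,{}^k_{ij}\partial_k u^\gamma+\Gamma^N\,{}^\gamma_{\alpha\beta}\partial_i u^\alpha\partial_j u^\beta\bigr)=0$ and compute the target Christoffel symbols of $h$ directly; the domain Christoffel terms drop by conformal invariance, leaving precisely the same two equations after multiplying through by $2A(R)$ and $2B(R)$.
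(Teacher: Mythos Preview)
Your proposal is correct and follows exactly the approach indicated by the paper, namely a direct computation of the Euler--Lagrange equations for the Lagrangian $\mathcal{L}=A(R)R_vR_w-\delta^2B(R)S_vS_w$; the paper itself omits the routine details you have supplied. The bookkeeping of the $A'(R)R_vR_w$ terms and the cancellation of the $-\delta^2$ factor are handled correctly.
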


\begin{proof}
The Lagrangian density is
\[
\mathcal L=A(R)R_vR_w-\delta^2B(R)S_vS_w.
\]

Hence
\[
\frac{\partial \mathcal L}{\partial R}
=
A'(R)R_vR_w-\delta^2B'(R)S_vS_w,
\]
\[
\frac{\partial \mathcal L}{\partial R_v}=A(R)R_w,
\qquad
\frac{\partial \mathcal L}{\partial R_w}=A(R)R_v.
\]

The Euler--Lagrange equation for $R$ becomes
\[
\partial_v(A(R)R_w)+\partial_w(A(R)R_v)
-
\left(A'(R)R_vR_w-\delta^2B'(R)S_vS_w\right)=0,
\]
which simplifies to
\[
2A(R)R_{vw}+A'(R)R_vR_w+\delta^2B'(R)S_vS_w=0.
\]

Similarly,
\[
\frac{\partial \mathcal L}{\partial S}=0,
\qquad
\frac{\partial \mathcal L}{\partial S_v}=-\delta^2B(R)S_w,
\qquad
\frac{\partial \mathcal L}{\partial S_w}=-\delta^2B(R)S_v.
\]
Thus, the Euler--Lagrange equation for $S$ becomes
\[
2B(R)S_{vw}+B'(R)(R_vS_w+R_wS_v)=0.
\]
\end{proof}

\subsection{Reduction to first integrals}

Following \cite{FotDask2}, the Euler--Lagrange system admits useful
first integrals.

\begin{proposition}\label{FandG}
If $u$ is harmonic, then there exist smooth functions
$F(v)$ and $G(w)$ such that
\begin{align}
A(R) R_v^{2} - \delta^{2} B(R) S_v^{2} &= F(v), \label{F} \\
A(R) R_w^{2} - \delta^{2} B(R) S_w^{2} &= G(w). \label{G}
\end{align}
Conversely, if \eqref{F} and \eqref{G} hold on an open set where
$\det(Ju)\neq0$, then $u$ is harmonic.
\end{proposition}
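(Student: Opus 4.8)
The plan is to show that the two quadratic quantities
\[
\Phi := A(R) R_v^{2} - \delta^{2} B(R) S_v^{2}, \qquad
\Psi := A(R) R_w^{2} - \delta^{2} B(R) S_w^{2}
\]
are functions of $v$ alone and of $w$ alone, respectively, exactly when the Euler--Lagrange system of Lemma~\ref{Euler-Lagrange} holds. Writing the two Euler--Lagrange expressions as
\[
E_1 := 2A(R) R_{vw} + A'(R) R_v R_w + \delta^{2} B'(R) S_v S_w,
\qquad
E_2 := 2B(R) S_{vw} + B'(R)\bigl(R_v S_w + R_w S_v\bigr),
\]
the whole proof rests on the two identities
\[
\partial_w \Phi = R_v E_1 - \delta^{2} S_v E_2,
\qquad
\partial_v \Psi = R_w E_1 - \delta^{2} S_w E_2 .
\]

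To obtain the first identity I would differentiate $\Phi$ in $w$ by the product and chain rules, producing four terms, and then group the $R$-terms against $R_v$ and the $S$-terms against $S_v$. This brings out the combinations $A'(R) R_v R_w + 2A(R) R_{vw}$ and $B'(R) R_w S_v + 2B(R) S_{vw}$. Rewriting each in terms of the Euler--Lagrange expressions, namely $A'(R) R_v R_w + 2A(R) R_{vw} = E_1 - \delta^{2} B'(R) S_v S_w$ and $B'(R) R_w S_v + 2B(R) S_{vw} = E_2 - B'(R) R_v S_w$, the two mixed terms $\pm\,\delta^{2} B'(R) R_v S_v S_w$ cancel, leaving precisely $\partial_w \Phi = R_v E_1 - \delta^{2} S_v E_2$. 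The computation for $\partial_v \Psi$ is identical after interchanging the roles of $v$ and $w$, so I would simply invoke this symmetry rather than repeat it.

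With the identities in hand, the two directions are immediate. For the forward implication, harmonicity means $E_1 = E_2 = 0$, hence $\partial_w \Phi = 0$ and $\partial_v \Psi = 0$; integrating in $w$ and in $v$ respectively yields $\Phi = F(v)$ and $\Psi = G(w)$ for smooth $F,G$, which is exactly \eqref{F}--\eqref{G}. For the converse, assuming \eqref{F}--\eqref{G} forces $\partial_w \Phi = \partial_v \Psi = 0$, so the two identities become a homogeneous linear system
\[
\begin{pmatrix} R_v & -\delta^{2} S_v \\ R_w & -\delta^{2} S_w \end{pmatrix}
\begin{pmatrix} E_1 \\ E_2 \end{pmatrix}
=
\begin{pmatrix} 0 \\ 0 \end{pmatrix}
\]
in the unknowns $(E_1,E_2)$, whose determinant is $-\delta^{2}\bigl(R_v S_w - R_w S_v\bigr)$.

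The main obstacle is precisely this converse step: the system only forces $E_1 = E_2 = 0$ where its determinant is nonzero, that is, away from the zero set of the Jacobian $R_v S_w - R_w S_v$ of $u$. This is the nondegeneracy already noted after Lemma~\ref{Euler-Lagrange}: under the traveling-wave ansatz the Jacobian is proportional to $R'(t)$, so restricting to $R'(t) \neq 0$ makes the coefficient matrix invertible and gives $E_1 = E_2 = 0$ identically, i.e.\ $u$ is harmonic. I expect everything apart from this nondegeneracy point to be routine bookkeeping of the two differentiations and the cancellation of the cross terms.
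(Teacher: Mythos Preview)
Your proposal is correct and follows exactly the approach the paper sketches: differentiate $\Phi$ and $\Psi$, recognize the resulting expressions as linear combinations of the Euler--Lagrange quantities $E_1,E_2$, and for the converse invoke the nondegeneracy of the Jacobian to invert the $2\times 2$ system. You have simply supplied the details the paper omits as ``routine.''
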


\begin{proof}
Differentiating~\eqref{F} with respect to $w$ and using the
Euler--Lagrange equations of Lemma~\ref{Euler-Lagrange} shows that
the right-hand side is indeed a function of $v$ alone, and
similarly for~\eqref{G}. Conversely, assuming~\eqref{F}
and~\eqref{G}, differentiating with respect to $w$ (resp.\ $v$) and
rearranging recovers the Euler--Lagrange system, provided the
Jacobian determinant $\det(Ju)$ is nonzero (see Remark~\ref{rem:jacobian} below for a verification of this
condition in the concrete examples).

More precisely, if $u$ is harmonic then
		\begin{align*}
			\left( A(R)R_v^2 - \delta^2 B(R)S_v^2 \right)_w 
			&= 
			R_v\left( A'(R)R_wR_v + 2A(R)R_{vw} \right)
			\\
			&\quad - \delta^2 S_v\left( B'(R)R_wS_v + 2B(R)S_{vw} \right)
			\\
			&=
			R_v\left( A'(R)R_wR_v - A'(R)R_vR_w - \delta^2 B'(R)S_vS_w \right)
			\\
			&\quad - \delta^2 S_v\left( B'(R)R_wS_v - B'(R)R_vS_w - B'(R)R_wS_v \right)
			\\
			&=
			-\delta^2 B'(R) R_v S_v S_w 
			+ \delta^2 B'(R) S_v R_v S_w 
			= 0,
		\end{align*}
		where in the second equality we applied Lemma \ref{Euler-Lagrange}.  
		Thus,
		\[
		\left( A(R)R_v^2 - \delta^2 B(R)S_v^2 \right)_w = 0,
		\]
		and we conclude \eqref{F}.  A similar computation yields \eqref{G}. 
		
		\medskip
		
		Conversely, suppose $u$ satisfies \eqref{F} and \eqref{G}.  
		Differentiating \eqref{F} with respect to $w$ gives
		\[
		A'(R)R_wR_v^2 + 2A(R)R_vR_{wv} - \delta^2B'(R)R_wS_v^2 - 2\delta^2B(R)S_vS_{vw} = 0
		\]
		and if we add and subtract the term $\delta^2B'(R)R_vS_vS_w$ we obtain the equation
		\begin{align}\label{System 1 R-S}
			R_v\left(
			2A(R)R_{vw} + A'(R)R_wR_v + \delta^2B'(R)S_vS_w
			\right)
		\end{align}
		\[
		- \delta^2S_v 
		\left(
		2B(R)S_{vw} + B'(R)R_wS_v + B'(R)R_vS_w
		\right) = 0. 
		\]
		Differentiating \eqref{G} with respect to $v$ yields
		\[
		A'(R)R_vR_w^2 + 2A(R)R_wR_{vw} - \delta^2B'(R)R_vS_w^2 - 2\delta^2B(R)S_wS_{vw} = 0
		\]
		and if we add and subtract the term $\delta^2B(R)R_wS_vS_w$ we obtain the equation
		\begin{align}\label{System 2 R-S}
			R_w 
			\left(
			A'(R)R_vR_w + 2A(R)R_{vw} + \delta^2B'(R)S_vS_w
			\right)
		\end{align}
		\[
		-\delta^2 S_w
		\left(
		2B(R)S_{vw} + B'(R)R_vS_w + B'(R)R_wS_v
		\right) = 0
		\]
		Hence, we solve the system of the equations 
		(\ref{System 1 R-S}) and (\ref{System 2 R-S}).
		Since the Jacobian of $u$ is nonzero, i.e.
		\[
		J(u)=R_v S_w - S_v R_w \neq 0,
		\]
		the above system implies exactly the Euler--Lagrange equations. Therefore, $u$ is harmonic.
\end{proof}

From this point on, we restrict attention to the subclass
corresponding to \emph{constant} first integrals:
\begin{equation}\label{F and G}
F(v) = \kappa + \tfrac{1}{\epsilon}\lambda,
\qquad
G(w) = \kappa - \tfrac{1}{\epsilon}\lambda,
\qquad \kappa,\lambda \in \mathbb{R}.
\end{equation}
This specialisation is what enables the subsequent reduction to
ODEs. In the non-constant case the system does not admit closed-form solutions in general.

\subsection{The travelling-wave ansatz}

To achieve separation of variables we employ the following ansatz:
\begin{align}
R(x,y) &= R(ay - bx) =: R(t), \label{R} \\
S(x,y) &= ax + by + H(ay - bx) =: ax + by + H(t), \label{S}
\end{align}
where $t = ay - bx$ and $a, b \in \mathbb{R}$.

\begin{remark}\label{rem:ansatz-motivation}
The variable t is a travelling-wave parameter whose level sets are affine lines in the domain.
\end{remark}

Substituting~\eqref{R}--\eqref{S} into~\eqref{F}--\eqref{G} reduces
the Euler--Lagrange system to a manageable system of ODEs for $R(t)$
and $H(t)$. The explicit solutions derived from these ODEs constitute
the harmonic maps presented in Theorem~\ref{Theorem1}.

\section{Main result and proof}
The following result should be understood as a local reduction statement, valid on regions where the induced Jacobian of the ansatz remains nondegenerate.

\begin{theorem}\label{Theorem1}
Let $(M,g)$ be a pseudo-Riemannian surface with isothermal metric
$g = e^{f(x,y)}(dx^2 - \epsilon^2 dy^2)$, $\epsilon \in \{1,i\}$,
and let $(N,h)$ be a pseudo-Riemannian surface with metric
$h = A(R)\,dR^2 - \delta^2 B(R)\,dS^2$, $\delta \in \{1,i\}$, where
$A, B$ are smooth positive functions. Let $a, b, \kappa, \lambda \in
\mathbb{R}$ be constants with $b^2 \neq \epsilon^2 a^2$ and $ab \neq 0$, and set
$t = ay - bx$. Consider the map $u = (R,S):(M,g)\to(N,h)$ defined by
\[
R(x,y) = R(t), \qquad S(x,y) = ax + by + H(t),
\]
where
\begin{align}\label{H'}
H'(t) =
\frac{2\lambda\delta^2(b^2 + \epsilon^2 a^2)
     + 4\kappa\delta^2 ab
     + ab(a^2+b^2)(\epsilon^2+1)B(R)}
     {(b^2 - \epsilon^2 a^2)(a^2+b^2)B(R)}
\end{align}
\begin{align}\label{R-improved}
R'(t)^{2}
= \frac{c_{1}B(R)^{2}
       + c_{2}\kappa\, B(R)
       + c_{3}\lambda\, B(R)
       + c_{4}}
       {(a^{2}+b^{2})^{2}(b^{2}-\epsilon^{2}a^{2})^2 A(R)B(R)},
\end{align}
and
\[
c_{1} = \delta^{2}\epsilon^{2}(a^{2}+b^{2})^{4}, \quad
c_{2} = 4(a^{2}+b^{2})^{2}(b^{2}+\epsilon^{2}a^{2}),
\]
\[
c_{3} = 8\epsilon^{2}ab(a^{2}+b^{2})^{2}, \quad
c_{4} = 4\delta^{2}\bigl(2\kappa ab
        + \lambda(b^{2}+\epsilon^{2}a^{2})\bigr)^{2}.
\]
Then $u$ is a harmonic, respectively wave, map. 
\end{theorem}
Equation~\eqref{R-improved} is separable; given any smooth solution
$R(t)$ on an interval where its right-hand side is positive,
equation~\eqref{H'} yields $H$ by quadrature, and the resulting map
$u$ is harmonic.

\begin{proof}
We substitute the ansatz \eqref{R}--\eqref{S} into the first
integrals \eqref{F}--\eqref{G} with the constants as
in~\eqref{F and G}. A
direct computation gives the system: 
\begin{align*}
(a^{2}+\epsilon^{2}b^{2})A(R)R'(t)^{2}
&= 4\epsilon^{2}\kappa
  + \delta^{2}B(R)\bigl[(b+aH'(t))^{2}
  + \epsilon^{2}(a-bH'(t))^{2}\bigr], \\
-abA(R)R'(t)^{2}
&= \delta^{2}B(R)(a-bH'(t))(b+aH'(t)) + 2\lambda. 
\end{align*}

Expanding the quadratic expressions and introducing the shorthand
\[
K := A(R)R'(t)^{2} - \delta^{2}B(R)H'(t)^{2},
\]
the system turns into the following equations
\begin{align}
(a^{2}+\epsilon^{2}b^{2})K
&= 4\epsilon^{2}\kappa
  + \delta^{2}B(R)\bigl(b^{2}+\epsilon^{2}a^{2}
  + 2(1-\epsilon^{2})abH'(t)\bigr), \label{System K1}\\
abK
&= -2\lambda
  + \delta^{2}B(R)\bigl((b^{2}-a^{2})H'(t) - ab\bigr). \label{System K2}
\end{align}

\noindent\textit{Step 1: Solve for $H'(t)$.}
Multiply \eqref{System K2} by $(a^2+\epsilon^2 b^2)/(ab)$ and
subtract from \eqref{System K1} to eliminate $K$:
\begin{align*}
\delta^2 B(R)
\left[
  \frac{(a^2+\epsilon^2 b^2)(b^2-a^2)}{ab}
  - 2(1-\epsilon^2)ab
\right] H'(t)
\\
= 4\epsilon^2\kappa
  + \frac{2\lambda(a^2+\epsilon^2 b^2)}{ab}
  - \delta^2 B(R)(b^2+\epsilon^2 a^2).
\end{align*}
Combining the bracket on the left, we obtain that
\[
\frac{(a^2+\epsilon^2 b^2)(b^2-a^2)}{ab} - 2(1-\epsilon^2)ab
= \frac{(b^2-\epsilon^2 a^2)(a^2+b^2)}{ab},
\]
and dividing through (using $b^2 \neq \epsilon^2 a^2$)
yields \eqref{H'}.

\noindent\textit{Step 2: Derive the ODE for $R(t)$.}

Let \(M\) denote the numerator of \eqref{H'}. Substituting \(H'(t)\) into \eqref{System K2}
		and expanding the resulting expression yields
		\[
		R'(t)^2 = \frac
		{\delta^2abM^2 + (b^4 - a^4)(b^2 - \epsilon^2a^2)M\delta^2B(R)}
		{ab(a^2 + b^2)^2(b^2 - \epsilon^2a^2)^2A(R)B(R)}
		\]
		\[
		- \frac{ 
			\left(
			2\lambda + ab\delta^2B(R)
			\right)
			(a^2 + b^2)^2
			(b^2 - \epsilon^2a^2)^2B(R)
		}
		{ab(a^2 + b^2)^2(b^2 - \epsilon^2a^2)^2A(R)B(R)}
		\]
		
	A direct computation of the numerator yields the coefficients for the powers of $B(R)$ as follows:
		\[
		\text{coefficient of }(B(R)^{2})=\delta^{2}\epsilon^{2}ab(a^{2}+b^{2})^{4},
		\]
		\[
		\text{coefficient of }(\kappa B(R))=4ab(a^{2}+b^{2})^{2}(b^{2}+\epsilon^{2}a^{2}),
		\]
		\[
		\text{coefficient of }(\lambda B(R))=8\epsilon^{2}a^2b^2(a^{2}+b^{2})^{2},
		\]
		\[
		\text{constant }=4\delta^{2}ab
		\big(2\kappa ab+\lambda(b^{2}+\epsilon^{2}a^{2})\big)^{2}.
		\]
		
		Thus, formula \eqref{R-improved}
is valid.

\noindent\textit{Step 3: Conclusion.}
Since the right-hand side of \eqref{R-improved} depends only on
$R(t)$, the equation is separable and can be integrated to give $R(t)$
on any interval where the right-hand side is positive. Substituting
the result into \eqref{H'} and integrating gives $H(t)$, and hence
$S(x,y)$. The map $u$ satisfies \eqref{F}--\eqref{G} by
construction, so it is harmonic by Proposition~\ref{FandG}.
\end{proof}

\begin{remark}\label{rem:jacobian}
A direct computation gives
\[
\det(Ju)=-(a^2+b^2)R'(t).
\]
Thus the nondegeneracy condition is equivalent to
\[
R'(t)\neq 0.
\]
Accordingly, the reduction argument applies locally on intervals where the profile function remains monotone. This condition is only required in the converse direction of
Proposition~\ref{FandG}. In the explicit examples below, the domain
is restricted to intervals where $R'(t)\neq0$.
\end{remark}

\begin{remark}\label{rem:degeneracy}
The assumption $b^2 \neq \epsilon^2 a^2$ is necessary for the
formulas \eqref{H'} and \eqref{R-improved} to be well-defined.
When $b^2 = \epsilon^2 a^2$, then $\epsilon=1$ and from the computation in Step 1 we find that $B(R)$ must be constant, forcing $R(t)$ to be constant and the Jacobian to vanish identically. 
Moreover if $a = 0$ or $b = 0$ then the calculations are much easier and the proof is omitted.
\end{remark}


\section{Explicit harmonic and wave map constructions}
\label{sec:examples}

We illustrate the scope of Theorem~4 through four representative examples. Our aim is not to exhaust all possible cases, but to show how the reduction produces explicit solutions for genuinely variable-curvature targets and in both elliptic and hyperbolic regimes. 

\subsection{Harmonic maps into ellipsoids}

Let $E_c$ denote the ellipsoid
\[
\frac{x^2}{c^2} + y^2 + z^2 = 1, \quad c > 1,
\]
endowed with the induced metric from $\mathbb{R}^3$. In geodesic
coordinates $(R,S)$, this metric takes the form
\[
h = (c^2\sin^2 R + \cos^2 R)\,dR^2 + \sin^2 R\,dS^2,
\]
so $A(R) = c^2\sin^2 R + \cos^2 R$ and $B(R) = \sin^2 R$.
The Gaussian curvature of $E_c$ is
\[
K(R) = \frac{c^2}{(c^2\sin^2 R + \cos^2 R)^2},
\]
which is non-constant for $c \neq 1$.

Ellipsoids provide a canonical family of deformations of the round
sphere. Unlike spheres or hyperbolic planes, they do not admit
transitive isometry groups, and many techniques of constructing explicit harmonic maps into ellipsoids, relying on symmetry
or constant curvature fail in this setting.

\begin{proposition}
		The map $u$ from a Riemannian surface M to an ellipsoid Riemannian surface $E_c$, i.e.
		\[
		u = (R,S): (M,e^{f(x,y)}(dx^2 + dy^2)) \rightarrow 
		(E_c, (c^2\sin^2R + \cos^2R)dR^2 + \sin^2R dS^2),
		\]
		where 
		\[
		R(x,y) = 
		\arccos
		\left(
		\cos\theta \sin
		\left(\frac{ay - bx}{\sqrt{c^2 - 1}}
		\right)
		\right),
		\]
		\[
		S(x,y) = 
		ax + by +
		\arctan
		\left(\sin\theta 
		\tan
		\left(\frac{ay - bx}{\sqrt{c^2 - 1}}
		\right)
		\right),
		\]
		with $\theta \in (0,\pi/2)$, is a harmonic map on the open set where the map is well defined and the Jacobian is nonvanishing.
	\end{proposition}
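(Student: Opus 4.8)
The plan is to exhibit $u$ as a member of the family produced by Theorem~\ref{Theorem1}, so the whole argument reduces to a parameter identification plus a bookkeeping calculation. First I would fix the signature data: both the domain $(M,e^{f}(dx^2+dy^2))$ and the target $E_c$ are Riemannian, which in the conventions of the excerpt means $\epsilon=i$ and $\delta=i$, so $\epsilon^{2}=\delta^{2}=-1$. Reading off the ellipsoid metric then gives
\[
A(R)=c^{2}\sin^{2}R+\cos^{2}R=1+(c^{2}-1)\sin^{2}R,\qquad B(R)=\sin^{2}R .
\]
With $\epsilon^{2}=-1$ the formulas of the theorem collapse nicely: the factor $\epsilon^{2}+1$ in the numerator of \eqref{H'} vanishes, $b^{2}+\epsilon^{2}a^{2}=b^{2}-a^{2}$, $b^{2}-\epsilon^{2}a^{2}=a^{2}+b^{2}$, and $c_{1}=(a^{2}+b^{2})^{4}$. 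Note the nondegeneracy hypothesis $b^{2}\neq\epsilon^{2}a^{2}=-a^{2}$ holds automatically for real $(a,b)\neq(0,0)$.

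Next I would compute $R'$ and $H'$ directly from the closed forms, writing $\tau=(ay-bx)/\sqrt{c^{2}-1}$. The relation $\cos R=\cos\theta\sin\tau$ gives $B(R)=\sin^{2}R=1-\cos^{2}\theta\sin^{2}\tau$, and differentiating the two explicit expressions (using $\cos^{2}\tau+\sin^{2}\theta\sin^{2}\tau=1-\cos^{2}\theta\sin^{2}\tau=B(R)$ to simplify $H'$) yields
\[
R'(t)^{2}=\frac{B(R)-\sin^{2}\theta}{(c^{2}-1)\,B(R)},\qquad
H'(t)=\frac{\sin\theta}{\sqrt{c^{2}-1}\,B(R)} .
\]
Since $\theta\in(0,\pi/2)$ forces $B(R)\ge\sin^{2}\theta>0$, one checks $R'^{2}\ge 0$ and $R\in(0,\pi)$, keeping $u$ smooth, away from the poles, and nondegenerate.

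The core of the argument is to choose $\kappa,\lambda$ so that these coincide with the theorem's \eqref{H'} and \eqref{R-improved}. Matching the $H'$ expressions forces the linear relation $2ab\,\kappa+(b^{2}-a^{2})\lambda=-\tfrac{(a^{2}+b^{2})^{2}\sin\theta}{2\sqrt{c^{2}-1}}$, which in turn pins down $c_{4}=-(a^{2}+b^{2})^{4}\sin^{2}\theta/(c^{2}-1)$. Substituting $A(R)=1+(c^{2}-1)B(R)$ into \eqref{R-improved} and clearing denominators turns the required equality into a polynomial identity in $B(R)$: the quadratic and constant terms match automatically (the constant term precisely because of the $H'$ relation just imposed), while the linear-in-$B$ term yields a second relation $4(b^{2}-a^{2})\kappa-8ab\,\lambda=(a^{2}+b^{2})^{2}\big(\tfrac{1}{c^{2}-1}-\sin^{2}\theta\big)$. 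The resulting $2\times 2$ system for $(\kappa,\lambda)$ has determinant $-4(a^{2}+b^{2})^{2}\neq 0$, so a unique solution exists.

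With this $(\kappa,\lambda)$ the given $R(t)$ solves \eqref{R-improved} and $H(t)$ is an antiderivative of \eqref{H'}, the additive constant being harmless since $\partial_{S}$ is a Killing field of $h$; Theorem~\ref{Theorem1} then gives that $u$ is harmonic. I expect the main obstacle to be the third step: verifying that the two scalar conditions extracted from matching $H'$ and $R'^{2}$ are simultaneously solvable and that no hidden constraint survives in the higher-degree terms of the polynomial identity. This is exactly what the simplifications from $\epsilon^{2}=\delta^{2}=-1$, combined with the structural relation $A=1+(c^{2}-1)B$, are designed to guarantee, and it is the one place where a careful (though routine) computation is unavoidable. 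As a consistency check one could instead substitute $(R,S)$ directly into the first integrals \eqref{F}--\eqref{G} with $F,G$ as in \eqref{F and G}.
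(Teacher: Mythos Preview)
Your proposal is correct and follows essentially the same route as the paper: both arguments reduce the claim to Theorem~\ref{Theorem1} by identifying $A(R)=c^{2}\sin^{2}R+\cos^{2}R$, $B(R)=\sin^{2}R$ and checking that the stated $R(t)$, $H(t)$ are compatible with \eqref{H'}--\eqref{R-improved} for suitable $\kappa,\lambda$. The only cosmetic difference is direction: the paper integrates \eqref{R-improved} forward (it is separable, with the relation $A=1+(c^{2}-1)B$ making the quadrature elementary), whereas you differentiate the proposed closed forms and match coefficients to pin down $(\kappa,\lambda)$; the resulting $2\times 2$ linear system you write is exactly what the paper's forward integration implicitly uses.
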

	\begin{proof}
		From equation (\ref{R-improved}), we obtain
		\[
		R'(t)^2 = \frac{c_2\sin^4R(t) + c_1\sin^2R(t) + c_0}
		{(a^2 + b^2)^4\sin^2R(t)((c^2 - 1)\sin^2R(t) + 1)}
		\]
		where 
		\[
		c_2 = (a^2 + b^2)^4, \quad 
		c_1 = 4(a^2 + b^2)^2
		\left(
		(b^2 - a^2)\kappa - 2ab\lambda
		\right),
		\]
		\[
		c_0 = - 4\left(
		2ab\kappa + (b^2 - a^2)\lambda
		\right)^2.
		\]
		Note that 
		\[c_{1}^2-4c_0 c_2 = 4^2 (a^2+b^2)^6 (\kappa^2 + \lambda^2) > 0.\]
		Since the discriminant of the numerator is positive, the numerator turns into $c_2(\sin^2R - R_1)(\sin^2R - R_2)$. Since $R_1 \cdot R_2 < 0$,  we can assume that $R_1 > 0$ and $R_2 < 0$ and so it follows that
		\[
		R'(t)^2 = \frac
		{(\sin^2 R(t) - R_1)(\sin^2 R(t) - R_2)}
		{\sin^2R(t)
			\left((c^2 - 1)\sin^2R(t) + 1\right)}.
		\]
		Since $c > 1$ we can set $R_2 = - \frac{1}{c^2 - 1}$ and since we want the numerator to be positive we set $R_1 = \sin^2\theta$ with $\theta \in (0,\pi/2)$. To this end, 
		\[
		R'(t)^2 = \frac{\sin^2 R(t) - \sin^2\theta}
		{(c^2 - 1)\sin^2R(t)}
		\Rightarrow
		\cos^2R(t) = \cos^2\theta\sin^2
		\left(\frac{t}{\sqrt{c^2 - 1}}\right).
		\]

		Since we have determined the function $R(t)$, we can compute $H(t)$ as follows
		\[
		H'(t) = \frac{-2\lambda(b^2 - a^2) - 4\kappa ab}
		{(a^2 + b^2)^2 \sin^2R(t)}
		\]
		and by replacing $R(t)$, we obtain that
		\[
		H'(t) = \frac{2\lambda(a^2 - b^2) - 4\kappa ab}{(a^2 + b^2)^2}
		\frac{1}{1 - \cos^2\theta \sin^2(\frac{t}{\sqrt{c^2 - 1}})}
		\]
		\[
		\Rightarrow
		H(t) = 
		\frac{2\lambda(a^2 - b^2) - 4\kappa ab}{(a^2 + b^2)^2}
		\frac{\sqrt{c^2 - 1}}{\sin\theta}
		\arctan
		\left(
		\sin\theta \tan(\frac{t}{\sqrt{c^2 - 1}})
		\right).
		\]
		Taking into account that $R_1\cdot R_2 = \frac{c_0}{c_2}$, we find 
		\[
		H(t) =  \arctan
		\left(
		\sin\theta
		\tan
		\left(
		\frac{t}{\sqrt{c^2 - 1}}
		\right)
		\right).
		\]
		To this end, we obtain $S(x,y) = ax + by + H(ay - bx)$.
		The corresponding constants $\kappa$ and $\lambda$ are determined by the following system
		\[
		(b^2 - a^2)\kappa - 2ab\lambda = 
		\frac{(a^2 + b^2)^2}{4}
		\left(
		\frac{1}{c^2 - 1} - \sin^2\theta
		\right),
		\]
		\[
		2ab\kappa + (b^2 - a^2)\lambda = 
		- 
		\frac{(a^2 + b^2)^2}{4}
		\frac{2\sin\theta}{\sqrt{c^2- 1}}
		\]
		and so we obtain
		\begin{align}\label{kappa}
			4\kappa = 
			(b^2 - a^2)
			\left(\frac{1}{c^2 - 1} - \sin^2\theta\right) - 
			\frac{4ab\sin\theta}{\sqrt{c^2 - 1}}
		\end{align}
		\begin{align}\label{lambda}
			2\lambda= 
			-(b^2 - a^2)\frac{\sin\theta}{\sqrt{c^2 - 1}}
			- ab\left(\frac{1}{c^2 - 1} - \sin^2\theta\right)
		\end{align}

		For completeness, we shall sketch the proof that the map $u$ satisfies the equations 
		\begin{align*}
			4\kappa &= A(R)(R_x^2 - R_y^2) - B(R)(S_x^2 - S_y^2),
			\\
			2\lambda &= A(R)R_xR_y + B(R)S_xS_y,
		\end{align*}
		where $\kappa, \lambda$ are given by (\ref{kappa}) and (\ref{lambda}).
		
		By the formula 
		\[
		\cos{R(t)}=\cos\theta\sin\left(\frac{t}{\sqrt{c^2-1}}\right)
		\]
		we find that 
		\[
		R'(t)^2 =\frac{1}{c^2-1}\frac{\sin^2{R(t)}-\sin^2{\theta}}{\sin^2{R(t)}}.
		\]
		Similarly, by the formula
		\[
		\tan{H(t)}=\sin{\theta}
		\tan
		\left(\frac{t}{\sqrt{c^2-1}}
		\right)
		\]
		we find that
		\[
		H'(t)=\frac{\sin{\theta}}{\sqrt{c^2-1}}\frac{1}{\sin^2{R(t)}}
		\]
		Recall now that 
		\begin{align*}
			- 4\kappa
			&=
			(a^{2}-b^{2})K
			+ \sin^2{R(t)}\big(b^{2}-a^{2}+4abH'(t)\big),\\[0.3em]
			-2\lambda
			&=
			abK
			+\sin^2{R(t)}\big( (b^{2}-a^{2})H'(t) - ab\big),
		\end{align*}
		where
		\begin{align*}
			K &= (1+(c^2-1)\sin^2{R(t)}) R'(t)^{2}+\sin^2{R(t)} H'(t)^{2}\\
			&=\sin^2{R(t)} + \frac{1}{c^2-1} - \sin^2\theta.
		\end{align*}
		Then, a straightforward calculation reveals that
		\begin{align*}
			(a^{2}-b^{2})K+\sin^2{R(t)}\big(b^{2}-a^{2}+4abH'(t)\big)
			\\=-(b^2 - a^2)
			\left(\frac{1}{c^2 - 1} + \sin^2\theta\right) +
			\frac{4ab\sin\theta}{\sqrt{c^2 - 1}} =-4\kappa
		\end{align*}
		and 
		\begin{align*}
			abK+  \sin^2{R(t)}\big( (b^{2}-a^{2})H'(t) - ab\big)
			\\=\frac{\sin\theta(b^2 - a^2)}{\sqrt{c^2 - 1}}
			+ab\left(\frac{1}{c^2 - 1} - \sin^2\theta\right)=-2\lambda.
		\end{align*}
		So we have verified that the map $u$ is harmonic and the proof is complete.
	\end{proof}

\subsection{Lorentzian wave maps into hyperboloids}

Let $F_c$ denote the elliptic hyperboloid of one sheet
\[
\frac{x^2}{c^2} + y^2 - z^2 = 1, \quad c > 1,
\]
endowed with the induced Lorentzian metric from
$\mathbb{R}^{2,1}$. In appropriate coordinates $(R,S)$ this metric
takes the form
\[
h = (c^2\sin^2 R + \cos^2 R)\,dR^2 - \sin^2 R\,dS^2,
\]
so $\delta = 1$, $A(R) = c^2\sin^2 R + \cos^2 R$ and $B(R) = \sin^2 R$.

\begin{proposition}\label{prop:hyperboloid}
The map $u$ from a Lorentzian surface $M$ to the elliptic hyperboloid Lorentzian surface $F_c$, i.e.
		\[
		u= (R,S): (M,e^{f(x,y)}(dx^2 - dy^2)) \rightarrow
		(F_c, (c^2\sin^2R + \cos^2R)dR^2 - \sin^2RdS^2)
		\]
		where 
		\[
		R(x,y) = \arccos 
		\left(
		\cosh\theta
		\sin
		\left(   
		\omega
		(ay - bx)
		\right)
		\right),
		\]
		\[
		S(x,y) = ax + by + \frac{2ab}{b^2 - a^2}(ay - bx)
		+ \arctanh
		\left(
		\sinh\theta 
		\tan
		\left(
		\omega(ay - bx)
		\right)
		\right),
		\]
		with $\theta \in \mathbb{R}$ and 
		$\omega = \frac{a^2 + b^2}{b^2 - a^2}
		\frac{1}{\sqrt{c^2 - 1}}$, is a Lorentzian wave map on the open set where the map is well defined and the Jacobian is nonvanishing.
	\end{proposition}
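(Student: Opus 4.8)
The plan is to specialize Theorem~\ref{Theorem1} to the Lorentzian regime $\epsilon=\delta=1$ with the profile functions of the elliptic hyperboloid, and then to pick the free constants $\kappa,\lambda$ so that the separable equation \eqref{R-improved} integrates to the stated closed form. The first step is to record that for this target one has $A(R)=c^{2}\sin^{2}R+\cos^{2}R$ and $B(R)=\sin^{2}R$, and that the decisive algebraic feature is the affine relation $A=1+(c^{2}-1)B$. Setting $\epsilon=\delta=1$ and writing $P=a^{2}+b^{2}$, the constants collapse to $c_{1}=P^{4}$, $c_{2}=4P^{3}$, $c_{3}=8abP^{2}$, and $c_{4}=4\bigl(2\kappa ab+\lambda P\bigr)^{2}$, so that the numerator of \eqref{R-improved} is the quadratic $Q(B)=P^{4}B^{2}+(4P^{3}\kappa+8abP^{2}\lambda)B+4(2\kappa ab+\lambda P)^{2}$.

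The heart of the argument is to arrange that this numerator be divisible by $A$. I would impose
\[
Q(B)=\frac{P^{4}}{c^{2}-1}\,\bigl(\sinh^{2}\theta+B\bigr)\bigl(1+(c^{2}-1)B\bigr),
\]
so that the factor $1+(c^{2}-1)B=A$ cancels against $A(R)$ in the denominator of \eqref{R-improved}. Matching the $B^{2}$ coefficient $c_{1}=P^{4}$ fixes the prefactor to be $P^{4}/(c^{2}-1)$, whence $R'(t)^{2}$ acquires the common factor $\omega^{2}=P^{2}/[(b^{2}-a^{2})^{2}(c^{2}-1)]$, i.e. the stated $\omega=\frac{a^{2}+b^{2}}{b^{2}-a^{2}}\frac{1}{\sqrt{c^{2}-1}}$; matching the constant term forces $2(2\kappa ab+\lambda P)=\pm\frac{P^{2}\sinh\theta}{\sqrt{c^{2}-1}}$, and matching the linear term fixes $4P^{3}\kappa+8abP^{2}\lambda$. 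These last two linear relations pin down $\kappa$ and $\lambda$ uniquely, and \eqref{R-improved} collapses to the separable form $R'(t)^{2}=\omega^{2}\bigl(\sinh^{2}\theta+B\bigr)/B$.

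It then remains to confirm the two explicit formulas. For $R$, I would set $\cos R=\cosh\theta\,\sin(\omega t)$, differentiate, and use $\sin^{2}R=B$ together with the identity $\cosh^{2}\theta\,\cos^{2}(\omega t)=\sinh^{2}\theta+B$ (which follows from $\cosh^{2}\theta\,\sin^{2}(\omega t)=\cos^{2}R=1-B$) to obtain precisely $R'(t)^{2}=\omega^{2}(\sinh^{2}\theta+B)/B$; hence $R(t)=\arccos(\cosh\theta\,\sin(\omega t))$ solves the ODE. For $S$, I would substitute this $R$ into \eqref{H'}. Since $\epsilon^{2}+1=2$ in the Lorentzian case, the $B(R)$ term in the numerator of \eqref{H'} contributes the constant drift $\frac{2ab}{b^{2}-a^{2}}t$ -- exactly the term absent in the Riemannian ellipsoid, where $\epsilon^{2}+1=0$ -- while the remaining part equals $\frac{\mu}{(b^{2}-a^{2})P\,B}$ with $\mu=2(2\kappa ab+\lambda P)$. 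Differentiating $\arctanh(\sinh\theta\,\tan(\omega t))$ and using $\cos^{2}(\omega t)-\sinh^{2}\theta\,\sin^{2}(\omega t)=B$ gives $\sinh\theta\,\omega/B$, which matches because the constant-term condition yields $\mu=\sinh\theta\,\omega\,(b^{2}-a^{2})P$. Integrating recovers the stated $H$, and Theorem~\ref{Theorem1} then delivers harmonicity.

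The main obstacle is the factorization step: one must recognize that the generic quadratic $Q(B)$ becomes the perfect product $(\sinh^{2}\theta+B)(1+(c^{2}-1)B)$ only for the special values of $\kappa,\lambda$, and that the factor $1+(c^{2}-1)B$ is exactly what is needed to cancel the coefficient $A(R)$. Concretely this means verifying that the constant and linear coefficient equations are simultaneously solvable for $\kappa,\lambda$; their compatibility is what makes the elementary closed form possible and is the step most likely to require care. A secondary point is the reality and domain constraint: one needs $\cosh\theta\,\sin(\omega t)\in[-1,1]$ for $R$ to be real-valued, so the harmonic map is defined on the $t$-intervals where this inequality holds.
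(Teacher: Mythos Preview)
Your proposal is correct and follows exactly the route the paper indicates: specialize Theorem~\ref{Theorem1} to $\epsilon=\delta=1$ with $A(R)=c^{2}\sin^{2}R+\cos^{2}R$, $B(R)=\sin^{2}R$, integrate \eqref{R-improved}, and recover $H$ from \eqref{H'}. In fact you supply considerably more than the paper, which omits all details here; your identification of the affine relation $A=1+(c^{2}-1)B$ and the choice of $\kappa,\lambda$ forcing the factorization $Q(B)\propto(\sinh^{2}\theta+B)\,A$ is precisely the mechanism that makes the quadrature elementary, and your verifications of the $R$ and $H$ formulas (including the drift term coming from $\epsilon^{2}+1=2$ and the solvability of the $2\times 2$ system for $\kappa,\lambda$ via the nonvanishing determinant $(b^{2}-a^{2})^{2}$) are all correct.
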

	\begin{proof}
	The proof follows similar steps as in the Riemannian ellipsoid case. Now we find
	\[
		R'(t)^2 = 
		\left(
		\frac{a^2 + b^2}{b^2 - a^2}
		\right)^2
		\frac{(\sin^2R - R_1)(\sin^2R - R_2)}
		{\sin^2R\left((c^2 - 1)\sin^2R + 1\right)}.
		\]
		and set $R_1 = - \frac{1}{c^2 - 1} < 0$ and
		$R_2 = - \sh^2\theta$, where $\theta \in \mathbb{R}$. So, by integrating
		\[
		\cos^2R(t) = \cosh^2\theta 
		\sin^2(\omega t),
		\]
		where $\omega  = \frac{a^2 + b^2}{(b^2 - a^2)\sqrt{c^2 - 1}}$.
		
		By substitution and integration we obtain
		\[
		H(t) =  \frac{2ab}{b^2 - a^2}t + 
		\arctanh
		\left(
		\sh\theta \tan(\omega t)
		\right).
		\]

	The values of $\kappa$ and $\lambda$ are given by the following system
	\begin{align}\label{kappa2}
			4\kappa  = \frac{(a^2 + b^2)^2}{(b^2 - a^2)^2}
			\left(
			(a^2 + b^2)
			\left(\sh^2\theta + \frac{1}{c^2 - 1}\right) - 
			\frac{4ab\sh\theta}{\sqrt{c^2 - 1}}
			\right)
		\end{align}
		\begin{align}\label{lambda2}
			2\lambda & = \frac{(a^2 + b^2)^2}{(b^2 - a^2)^2}
			\left(
			(a^2 + b^2)\frac{\sh\theta}{\sqrt{c^2 - 1}} - 
			ab\left(\sh^2\theta + \frac{1}{c^2 - 1}\right)
			\right) .
		\end{align}

	For completeness, we shall sketch the proof that the map $u$ satisfies the following equations,
		\begin{align*}
			4\kappa &= A(R)(R_x^2 + R_y^2) - B(R)(S_x^2 + S_y^2), \\
			2\lambda & = A(R)R_xR_y - B(R)S_xS_y.
		\end{align*}
		where $\kappa, \lambda$ are given by (\ref{kappa2}) and (\ref{lambda2}).
		
		By the formula 
		\[
		\cos{R(t)}=\cosh\theta\sin(\omega t)
		\]
		we find that 
		\[
		R'(t)^2 =\omega^2\frac{\sinh^2\theta + \sin^2{R(t)}}{\sin^2{R(t)}}.
		\]
		Similarly, by the formula
		\[
		H(t) =  \frac{2ab}{b^2 - a^2}t + 
		\arctanh
		\left(
		\sh\theta \tan(\omega t)
		\right).
		\]
		we find that
		\[
		H'(t)=\frac{2ab}{b^2-a^2}+\omega\frac{\sinh{\theta}}{\sin^2{R(t)}}
		\]
		
		Recall now that we have to prove that the following equations are valid,
		\begin{align*}
			(a^{2}+b^{2})(K- \sin^2{R(t)})
			&=
			4\kappa,\\[0.3em]
			abK-\sin^2{R(t)}\big( (b^{2}-a^{2})H'(t) - ab\big)
			&=
			-2\lambda,
		\end{align*}
		where
		\begin{align*}
			K &= (1+(c^2-1)\sin^2{R(t)}) R'(t)^{2} - \sin^2{R(t)} H'(t)^{2}\\
			&=\sin^2{R(t)}
			\left( \omega^2 (c^2-1)-\frac{4a^2b^2}{(b^2-a^2)^2}\right)
			\\
			&+\left(\omega^2 (1+(c^2-1)\sinh^2{\theta})-\omega\sinh{\theta} \frac{4ab}{b^2-a^2} \right) .
		\end{align*}
		Then, a straightforward calculation reveals that
		\[
		(a^{2}+b^{2})(K- \sin^2{R(t)})
		\]
		\[
		=\frac{(a^2 + b^2)^2}{(b^2 - a^2)^2}
		\left(
		(a^2 + b^2)
		\left(\sh^2\theta + \frac{1}{c^2 - 1}\right) - 
		\frac{4ab\sh\theta}{\sqrt{c^2 - 1}}
		\right) =4\kappa   
		\] 
		and 
		\[
		abK-\sin^2{R(t)}\big( (b^{2}-a^{2})H'(t) - ab\big)
		\]
		\[
		=\frac{(a^2 + b^2)^2}{(b^2 - a^2)^2}
		\left(
		ab\left(\sh^2\theta + \frac{1}{c^2 - 1}\right) -(a^2 + b^2)\frac{\sh\theta}{\sqrt{c^2 - 1}}
		\right) =-2\lambda.
		\]
		So we have verified that the map $u$ is a Lorentzian wave map and the proof is complete.
	\end{proof}

\subsection{Lorentzian wave maps into the Schwarzschild exterior}

We construct an explicit wave map whose target is the two-dimensional
Lorentzian reduction of the Schwarzschild spacetime:
\[
h = \left(1 - \frac{2M}{R}\right)^{-1}dR^2
  - \left(1 - \frac{2M}{R}\right)dS^2,
\qquad R > 2M,\; M > 0.
\]
Here $A(R) = (1-2M/R)^{-1}$ and $B(R) = 1-2M/R$, so
$A(R)B(R) = 1$. This degeneracy considerably simplifies the
reduction.

\begin{proposition}\label{prop:schwarz}
Let $M>0$ and $b > a > 0$. Set $\omega = \frac{a^2+b^2}{b^2-a^2} > 0$
and $t = ay - bx$. The map $u = (R,S)$ from a Lorentzian surface
$(M, e^{f(x,y)}(dx^2-dy^2))$ to the Schwarzschild exterior, defined by
\[
S(x,y) = \omega(by - ax),
\]
and $R(x,y) = R(t)$ is determined implicitly by
\[
R(t) + 2M\log(R(t)-2M) = \omega(t-t_0), \qquad R(t) > 2M,
\]
for some $t_0 \in \mathbb{R}$, is a wave map. The implicit relation
defines $R(t)$ as a smooth, strictly monotone function of $t$ on the
Schwarzschild exterior region $\{R > 2M\}$.
\end{proposition}

\begin{proof}
Set $\epsilon = \delta = 1$ and $\kappa = \lambda = 0$.
Since $A(R)B(R) = 1$,
equation~\eqref{R-improved} becomes
\[
R'(t)^2 = \omega^2\bigl(1 - \tfrac{2M}{R}\bigr)^2.
\]
Taking the positive square root, separating variables and integrating gives the stated implicit formula. Formula~\eqref{H'} gives $H'(t) = 2ab/(b^2-a^2)$,
a constant, so $S(x,y) = ax+by + \frac{2ab}{b^2-a^2}(ay-bx) =
\omega(by-ax)$. The right-hand side of~\eqref{R-improved} is
strictly positive for $R>2M$, so $R'(t)\neq 0$ and the map is
nondegenerate.
\end{proof}

\begin{remark}
The condition $R > 2M$ restricts the wave map to the Schwarzschild
exterior, avoiding the event horizon. The implicit relation
$R + 2M\log(R-2M) = \omega(t-t_0)$ is smoothly invertible on this
region since its left-hand side is strictly increasing in $R$.
This example illustrates that the framework applies naturally to
Lorentzian geometries arising in general relativity.
\end{remark}

\subsection{A mixed-signature example}

We now consider the case $\delta \neq \epsilon$, so that
$\delta^2\epsilon^2 = -1$.

\begin{proposition}
		The map $u=(R,S)$ from pseudo-Riemannian surface $M$ that is equipped with the metric $dx^2-\epsilon^2 dy^2$ to the pseudo-Riemannian surface $N$ that is equipped with the metric  
		\[
		h=dR^2 -\delta^2\tanh^2R dS^2,
		\]
		where 
		\[
		R(x,y) = \operatorname{arcsinh}
		\left(
		\frac
		{\cos{\theta}\cdot y-\sin{\theta} \cdot x}
		{\cos^2{\theta} - \epsilon^2\sin^2{\theta} }
		\right),
		\]
		\[
		S(x,y) = \cos{\theta}\cdot x+\sin{\theta}\cdot y+\frac{(1+\epsilon^2)\cos{\theta}\sin{\theta}}
		{(\cos^2{\theta} - \epsilon^2\sin^2{\theta}) }
		(\cos{\theta} \cdot y-\sin{\theta}\cdot x),
		\]
		with $\theta \in (0,\frac{\pi}{4})$, is a wave map.
	\end{proposition}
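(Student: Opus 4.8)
The plan is to realize this map as a direct specialization of Theorem~\ref{Theorem1}, with target data $A(R)=1$ and $B(R)=\tanh^2 R$ and with the constants chosen as $a=\sin\theta$, $b=\cos\theta$, so that $a^2+b^2=1$ and $b^2-\epsilon^2a^2=\cos^2\theta-\epsilon^2\sin^2\theta$. Since the domain and target carry opposite causal structures, the relation $\delta^2\epsilon^2=-1$ is in force, and this is the feature that drives the whole computation: it makes $c_1=\delta^2\epsilon^2(a^2+b^2)^4=-1$ negative. With $B=\tanh^2 R$ positive, a negative leading coefficient is precisely what is needed for the right-hand side of \eqref{R-improved} to stay non-negative and to collapse onto $\sech^2 R$.

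The heart of the argument is the selection of the two integration constants $\kappa,\lambda$. First I would impose $c_4=0$, that is $2\kappa ab+\lambda(b^2+\epsilon^2a^2)=0$. This single condition does double duty: it removes the constant term in the numerator of \eqref{R-improved}, and it simultaneously annihilates the $\kappa,\lambda$ contributions in \eqref{H'}, leaving the constant traveling-wave correction $H'(t)=\dfrac{(1+\epsilon^2)ab}{b^2-\epsilon^2a^2}$, which is exactly the linear term appearing in the stated $S$. Next I would fix the overall scale by the normalization $c_2\kappa+c_3\lambda=1$. Together these two linear equations determine $\kappa,\lambda$ uniquely; using the factorization $(b^2+\epsilon^2a^2)^2-4\epsilon^2a^2b^2=(b^2-\epsilon^2a^2)^2$ one finds $\kappa=\dfrac{b^2+\epsilon^2a^2}{4(b^2-\epsilon^2a^2)^2}$ and $\lambda=-\dfrac{ab}{2(b^2-\epsilon^2a^2)^2}$.

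With these choices the numerator of \eqref{R-improved} reduces to $c_1B^2+B=-\tanh^4R+\tanh^2R$, and dividing by $(b^2-\epsilon^2a^2)^2 B$ yields the separable equation $R'(t)^2=\sech^2R/(b^2-\epsilon^2a^2)^2$. Separating variables gives $\cosh R\,dR=\pm\,dt/(b^2-\epsilon^2a^2)$, hence $\sinh R=t/(b^2-\epsilon^2a^2)$ and $R(t)=\operatorname{arcsinh}\big(t/(b^2-\epsilon^2a^2)\big)$, which is the claimed formula once $t=ay-bx$ is written out. Integrating the constant $H'$ recovers the linear correction, and assembling $S=ax+by+H(t)$ reproduces the stated $S$. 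By Theorem~\ref{Theorem1} the map $u=(R,S)$ is then harmonic.

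The main obstacle I anticipate is not the integration but the bookkeeping that guarantees the construction is genuine, namely that $R'(t)^2\ge0$ and that the denominators in \eqref{H'}--\eqref{R-improved} do not vanish. This is exactly where the restriction $\theta\in(0,\pi/4)$ enters: it controls the sign of $b^2-\epsilon^2a^2$ in the Lorentzian-domain subcase ($\epsilon^2=1$, where $b^2-\epsilon^2a^2=\cos2\theta>0$), while in the Riemannian-domain subcase ($\epsilon^2=-1$) one has $b^2-\epsilon^2a^2=1$ automatically. One should also verify the non-degeneracy hypothesis $R'(t)\neq0$ of Theorem~\ref{Theorem1}, which is immediate since $\sech^2R>0$.
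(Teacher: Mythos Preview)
Your approach is exactly what the paper intends—the paper gives no proof for this proposition and simply relies on Theorem~\ref{Theorem1}—and your choices $A=1$, $B=\tanh^2R$, $a=\sin\theta$, $b=\cos\theta$, together with the two linear conditions $c_4=0$ and $c_2\kappa+c_3\lambda=1$, carry out that specialization cleanly. One bookkeeping caveat: with $a=\sin\theta$, $b=\cos\theta$ (which is forced by the argument of $R$ and by the denominator $\cos^2\theta-\epsilon^2\sin^2\theta=b^2-\epsilon^2a^2$), the linear part $ax+by$ of $S$ comes out as $\sin\theta\,x+\cos\theta\,y$, not $\cos\theta\,x+\sin\theta\,y$ as printed in the proposition; this appears to be a typographical slip in the statement rather than a gap in your argument.
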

	This example illustrates the flexibility of the method and confirms that the reduction mechanism is insensitive to the relative signatures of the domain and target. The above metric provides another example of a variable-curvature surface.
	
	\begin{proof}
For $a=\cos{\theta}, b=\sin{\theta}, A(R)=1, B(R)=\tanh^2{R}$ and considering $\kappa, \lambda$ given by the system
	\begin{align}\label{kappa3}
		(b^2+\epsilon^2 a^2)\kappa+2\epsilon^2 ab\lambda &=1/4
	\end{align}
	\begin{align}\label{lambda3}
		2ab\kappa+(b^2+\epsilon^2 a^2)\lambda =0
	\end{align}
	we derive the equations
	\[
	R'(t)^2 = 
	\frac{(1-\tanh^2{R})}{(\sin^2{\theta}-\epsilon^2 \cos^2{\theta})^2}
	\]
	and thus
	\[
	H'(t)=\frac{(1+\epsilon^2)\cos{\theta}\sin{\theta}}{(\sin^2{\theta}-\epsilon^2 \cos^2{\theta})}
	\]
	thus we conclude that the given map $u$ is indeed a harmonic map.

	It is straightforward to prove that the map $u$ satisfies the system
	\begin{align*}
		(a^{2}+\epsilon^{2}b^{2})K
		&=
		4\epsilon^{2}\kappa
		+\delta^{2}\tanh^2{R(t)}\big(b^{2}+\epsilon^{2}a^{2}+2(1-\epsilon^{2})abH'(t)\big),\\[0.3em]
		abK
		&=
		-2\lambda
		+\delta^{2} \tanh^2{R(t)}\big( (b^{2}-a^{2})H'(t) - ab\big),
	\end{align*}
	where
	\[
	K = R'(t)^{2}-\delta^{2}\tanh^2{R(t)}H'(t)^{2}.
	\]
	and $\kappa, \lambda$ are given by (\ref{kappa3}) and (\ref{lambda3}).
\end{proof}

\section{Concluding remarks}

We summarise the scope of the framework and indicate two natural
directions for further work.

The results of this paper fill a gap in the literature by providing 
explicit harmonic and wave maps into variable-curvature 
pseudo-Riemannian surfaces. The examples presented---including 
targets of both Riemannian ($\delta = i$) and Lorentzian 
($\delta = 1$) signature---verify the construction and yield 
closed-form solutions that were previously inaccessible, with 
potential applications in both geometric analysis and mathematical 
physics.

In Proposition~\ref{FandG}, allowing $F(v)$ and $G(w)$ to be
non-constant functions of $v$ and $w$ respectively leads to a
non-autonomous system \eqref{System K1}--\eqref{System K2}. In that
regime, explicit closed-form solutions are generally obstructed, but
the reduction still provides a framework for analysing qualitative
properties via dynamical systems or perturbation methods.

The conformal invariance exploited here is specific to two
dimensions. Nevertheless, the core reduction mechanism, replacing
a PDE system by an algebraic constraint plus a separable ODE, can
in principle be adapted to higher dimensions. This is a direction we plan to pursue in future
work.


\vspace{20pt}

\address{
\noindent\textsc{Anestis Fotiadis}
\href{mailto:fotiadisanestis@math.auth.gr}
{fotiadisanestis@math.auth.gr}\\
Department of Mathematics, Aristotle University of Thessaloniki,\\
Thessaloniki 54124, Greece}

\vspace{10pt}

\address{
\noindent\textsc{Giannis Polychrou}
\href{mailto:polychrou.ioannis@ucy.ac.cy}
{polychrou.ioannis@ucy.ac.cy}\\
University of Cyprus, Department of Mathematics and Statistics,\\
Nicosia 1678, Cyprus}

\end{document}